\newtheorem{theorem}{Theorem}[section]
\newtheorem{lemma}[theorem]{Lemma}
\newtheorem{corollary}[theorem]{Corollary}
\theoremstyle{definition}
\theoremstyle{remark}
\newtheorem{remark}[theorem]{Remark}
\numberwithin{equation}{section}
\begin{document}

\title [Complementary and refined inequalities of Callebaut inequality]{Complementary and refined inequalities of Callebaut inequality for operators}

\author[M. Bakherad, M.S. Moslehian]{Mojtaba Bakherad$^1$ and Mohammad Sal Moslehian$^2$}

\address{$^{1}$ Department of Pure Mathematics, Ferdowsi University of Mashhad, P. O. Box 1159, Mashhad 91775, Iran.}
\email{mojtaba.bakherad@yahoo.com; bakherad@member.ams.org}

\address{$^2$ Department of Pure Mathematics, Center of Excellence in
Analysis on Algebraic Structures (CEAAS), Ferdowsi University of
Mashhad, P. O. Box 1159, Mashhad 91775, Iran}
\email{moslehian@um.ac.ir,
moslehian@member.ams.org}

\subjclass[2010]{Primary 47A63, Secondary 15A60, 47A60.}

\keywords{Callebaut inequality; operator mean; Mond--Pe\v{c}ari\'c method; Hadamard product; operator geometric mean.}
\begin{abstract}
The Callebaut inequality says that
\begin{align*}
\sum_{ j=1}^n \left(A_j\sharp B_j\right)\leq \left(\sum_{ j=1}^n A_j \sigma B_j\right)\sharp\left(\sum_{ j=1}^n A_j \sigma^{\bot} B_j\right)\leq\left(\sum_{ j=1}^n A_j\right)\sharp \left(\sum_{ j=1}^nB_j\right)\,,
\end{align*}
where $A_j, B_j\,\,(1\leq j\leq n)$ are positive invertible operators and $\sigma$ and $\sigma^\perp$ are an operator mean and its dual in the sense of Kabo and Ando, respectively. In this paper we employ the Mond--Pe\v{c}ari\'c method as well as some operator techniques to establish a complementary inequality to the above one under mild conditions. We also present some refinements of a Callebaut type inequality involving the weighted geometric mean and Hadamard products of Hilbert space operators.
\end{abstract} \maketitle

\section{Introduction and preliminaries}
Let ${\mathbb B}({\mathscr H})$ denote the $C^*$-algebra of all
bounded linear operators on a complex Hilbert space ${\mathscr H}$ with the identity $I$. In the case when ${\rm dim}{\mathscr H}=n$, we identify ${\mathbb B}({\mathscr H})$ with the matrix algebra $\mathbb{M}_n$ of all
$n\times n$ matrices with entries in the complex field.
An operator $A\in{\mathbb B}({\mathscr H})$ is called positive
if $\langle Ax,x\rangle\geq0$ for all $x\in{\mathscr H }$, and we then write $A\geq0$. We write $A>0$ if $A$ is
a positive invertible operator. The set of all positive invertible operators (resp., positive definite for matrices) is denoted by ${\mathbb B}({\mathscr H})_+$ (resp., $\mathcal{P}_n)$. For
self-adjoint operators $A, B\in{\mathbb B}({\mathscr H})$, we say
$B\geq A$ if $B-A\geq0$.\\
It is known that the Hadamard
product can be presented by filtering the
tensor product $A \otimes B$ through a positive linear map. In fact,
 $A\circ B=U^*(A\otimes B)U$,
where $U:{\mathscr H}\to {\mathscr H}\otimes{\mathscr H}$ is the isometry
defined by $Ue_j=e_j\otimes e_j$, where $(e_j)$ is an orthonormal basis of the Hilbert space ${\mathscr H}$; see \cite{paul}. In the case of matrices, one easily observe that the Hadamard product of $A=(a_{ij})$ and
$B=(b_{ij})$ is $A\circ B=(a_{ij}b_{ij})$, a principal
submatrix of the tensor product $A\otimes B=(a_{ij}B)_{1 \leq i,j\leq n}$.\\

Let $f$ be a continuous real valued function defined on an interval $J$. It is called operator monotone if
$A\leq B$ implies $f(A)\leq f(B)$ for all self-adjoint operators $A, B\in {\mathbb B}({\mathscr H})$ with spectra in $J$. It said to be operator convex if $f(\lambda A+(1-\lambda)B) \leq \lambda f(A)+(1-\lambda)f(B)$ for all self-adjoint operators $A, B\in {\mathbb B}({\mathscr H})$ with spectra in $J$ and all $\lambda\in [0,1]$.

The axiomatic theory for operator means of positive invertible operators have been developed by Kubo and Ando \cite{ando}. A binary operation $\sigma$ on ${\mathbb B}({\mathscr H})_+$ is called a connection, if the following conditions are satisfied:
\begin{itemize}
\item [(i)] $A\leq C$ and $B\leq D $ imply
$A\sigma B\leq C\sigma D$;
\item [(ii)] $A_n\downarrow A$ and $ B_n\downarrow B$ imply
$A_n\sigma B_n\downarrow A\sigma B$, where $A_n\downarrow A$ means that $A_1\geq A_2\geq \cdots$ and $A_n\rightarrow A$ as $n\rightarrow\infty$ in the strong operator topology;
\item [(iii)] $T^*(A\sigma B)T\leq (T^*AT)\sigma (T^*BT)\,\,(T\in{\mathbb B}({\mathscr H}))$.
\end{itemize}
There exists an affine order isomorphism between the class of
connections and the class of positive operator monotone functions
$f$ defined on $(0,\infty)$  via
$f(t)I=I\sigma(tI)\hspace{.1cm}(t>0)$. In addition, $A\sigma
B=A^{1\over 2}f(A^{-1\over2}BA^{-1\over2})A^{1\over2}$ for all
 $A, B\in{\mathbb B}({\mathscr H})_+$. The operator monotone function
$f$ is called the representing function of $\sigma$. The dual $\sigma^\perp$ of a connection  $\sigma$ with the representing function $f$ is the connection with the representing function $t/f(t)$. A connection $\sigma$ is a mean if it is normalized, i.e. $I\sigma I=I.$ The function $f_{\sharp_\mu}(t)=t^\mu$ on
$(0,\infty)$ for $\mu\in(0,1)$ gives the operator weighted geometric mean $A\sharp_\mu
B=A^{\frac{1}{2}}\left(A^{\frac{-1}{2}}BA^{\frac{-1}{2}}\right)^{\mu}A^{\frac{1}{2}}$. The case $\mu=1/2$ gives rise to the geometric mean $A\sharp B$.
An operator mean $\sigma$ is symmetric if $A\sigma B=B\sigma A$ for all
 $A, B\in{\mathbb B}({\mathscr H})_+$. For a symmetric operator mean $\sigma$, a parametrized operator mean $\sigma_t,\,\, 0 \leq t\leq 1$ is called an interpolational path for $\sigma$ if it satisfies
\begin{itemize}
\item [(1)] $A\sigma_0 B=A$,
$A\sigma_{1/2}B=A\sigma B$, and $A\sigma_1B=B$;
\item [(2)] $(A\sigma_p B)\sigma(A\sigma_qB)=A\sigma_{\frac{p+q}{2}}B$ for all  $p,q\in[0,1]$;
\item [(3)] The map $t\in[0,1]\mapsto A\sigma_tB$ is norm continuous for each $A$ and $B$.
\end{itemize}
It is easy to see that the set of all $r\in[0, 1]$ satisfying
\begin{align}\label{fujii23}
(A\sigma_p B)\sigma_r(A\sigma_qB)=A\sigma_{rp+(1-r)q}B
 \end{align}
for all $p, q$ is a convex subset of $[0, 1]$ including $0$ and $1$.
The power means
\begin{align*}
Am_rB=A^\frac{1}{2}\left(\frac{1+(A^\frac{-1}{2}BA^\frac{-1}{2})^r}{2}\right)^\frac{1}{r}A^\frac{1}{2}\qquad(r\in[-1,1])
 \end{align*}
are some typical interpolational means. Their interpolational paths are
 \begin{align*}
Am_{r,t}B=A^\frac{1}{2}\left({1-t+t(A^\frac{-1}{2}BA^\frac{-1}{2}})^r\right)^\frac{1}{r}A^\frac{1}{2}\qquad(t\in[0,1]).
 \end{align*}
 In particular, $Am_{1,t}B=A\nabla_t B=(1-t)A+tB, Am_{0,t}B=A\sharp_t B$ and $Am_{-1,t}B=A!_t B=\left((1-t)A^{-1}+tB^{-1}\right)^{-1}$. The representing function $F_{r,t}$ of $m_{r,t}$ is  \begin{align*}
F_{r,t}(x)=1m_{r,t}x=(1-t+tx^r)^\frac{1}{r}\qquad(x>0).
 \end{align*}
Daykin et al. \cite{eli} showed the following refinement of the Cauchy--Schwarz inequality. If $f (\cdot,\cdot)$ and $g(\cdot,\cdot)$ are positive functions with two variables on $(0,\infty)\times(0,\infty)$ such that $f(x, y)g(x, y)=x^2y^2$, $f(\lambda x, \lambda y)=\lambda^2 f(x, y)$ and ${yf(x, 1)\over xf(y, 1)}+{xf(y, 1)\over yf(x, 1)}\leq {x\over y}+{y\over x}$ hold for all positive real numbers $x, y , \lambda$, then inequalities
\begin{align*}
\left(\sum_{j=1}^n x_jy_j\right)^2\leq
\sum_{j=1}^n f(x_j,y_j)\sum_{j=1}^n g(x_j,y_j)
\leq \left(\sum_{j=1}^n x_j^2\right)\left(\sum_{j=1}^ny_j^2\right)
 \end{align*}
hold for all positive real numbers $ x_j, y_j \,\,(1\leq j\leq n)$.
A example of such pair of the functions are $f(x, y)=x^{1+s}y^{1-s}$ and $g(x, y)=x^{1-s}y^{1+s}$. Thus we get the following inequality due to Callebaut \cite{CAL}
\begin{align*}
\left(\sum_{j=1}^n x_jy_j\right)^2\leq
\sum_{j=1}^n x_j^{1+s}y_j^{1-s}\sum_{j=1}^n x_j^{1-s}y_j^{1+s}
\leq \left(\sum_{j=1}^n x_j^2\right)\left(\sum_{j=1}^ny_j^2\right),
 \end{align*}
where $ x_j, y_j \,\,(1\leq j\leq n)$ are positive real numbers and $s\in[0,1]$. This is indeed an extension of the Cauchy--Schwarz inequality.
Another example of such pair of the functions are $f(x, y)=x^{2}+y^{2}$ and $g(x, y)=\frac{x^{2}y^{2}}{x^2+y^2}$. Hence we reach the following Milne inequality \cite{eli}
\begin{align*}
\sum_{j=1}^n \sqrt{x_jy_j}\leq
\sqrt{\sum_{j=1}^n \left(x_j+y_j\right)\sum_{j=1}^n \frac{x_jy_j}{x_j+y_j}}
\leq \sqrt{\sum_{j=1}^n x_j\sum_{j=1}^ny_j},
 \end{align*}
where $ x_j, y_j \,\,(1\leq j\leq n)$ are positive real numbers.\\
There have been obtained several Cauchy--Schwarz type inequalities or Hilbert space operators and matrices; see \cite{ABM, I-V} and references therein.
Wada \cite{wada} gave an
operator version of the Callebaut inequality. Hiai and Zhan established a matrix analog of the Callebaut inequality by considering the convexity of a certain norm function \cite{HZ}. In \cite{caleba} the authors showed another operator version of the Callebaut inequality:
\begin{align}\label{caleba21}
\sum_{ j=1}^n \left(A_j\sharp B_j\right)\leq \left(\sum_{ j=1}^n A_j \sigma B_j\right)\sharp\left(\sum_{ j=1}^n A_j \sigma^{\bot} B_j\right)\leq\left(\sum_{ j=1}^n A_j\right)\sharp \left(\sum_{ j=1}^nB_j\right)\,,
\end{align}
where $A_j, B_j\in{\mathbb B}({\mathscr H})_+\,\,(1\leq j\leq n)$ and $\sigma$ is an operator mean.
They presented
\begin{align}\label{caleba223}
\left(\sum_{ j=1}^nA_j\sigma_s B_j\right)\sharp\left(\sum_{ j=1}^n A_j\sigma_{1-s} B_j\right)\leq \left(\sum_{ j=1}^nA_j\sigma_t B_j\right)\sharp\left(\sum_{ j=1}^n A_j\sigma_{1-t} B_j\right)\,,
\end{align}
where $A_j, B_j\in{\mathbb B}({\mathscr H})_+\,\,(1\leq j\leq n)$, $\sigma_t$ is an interpolational
path for $\sigma$ such that $\sigma_t^\perp=\sigma_{1-t}$, $t\in[0,1]$ and $s$ is a real number between $t$ and $1-t$.\\
They also showed that
\begin{align}\label{34rf}
\sum_{j=1}^n(A_j\sharp B_j)\circ \sum_{j=1}^n(A_j\sharp B_j)\nonumber
&\leq\sum_{j=1}^n(A_j\sharp_{s}B_j)\circ \sum_{j=1}^n(A_j\sharp_{1-s}B_j)\nonumber
\\&\leq \sum_{j=1}^n(A_j\sharp_tB_j)\circ \sum_{j=1}^n(A_j\sharp_{1-t} B_j)\nonumber\\&\leq\left(\sum_{j=1}^nA_j\right)\circ \left(\sum_{j=1}^nB_j\right)\,,
\end{align}
where $A_j, B_j\in\mathcal{P}_n\,\,(1\leq j\leq n)$ and either $1\geq t\geq s>{\frac{1}{2}}$ or $0\leq t\leq s<\frac{1}{2}$.\\
In this paper, we present some reverses of inequalities \eqref{caleba21} and \eqref{caleba223} under some mild conditions and discuss some related problems. In the last section, we obtain a refinement of inequality \eqref{34rf}.
\section{Some reverses of the Callebaut inequality for Hilbert space operators}

In this section, we provide some reverses of operator Callebaut inequality under some mild conditions.
It is known \cite[Theorem 5.7]{abc} that for positive operators $A_j, B_j\in{\mathbb B}({\mathscr H})\,\,(1\leq j\leq n)$ it holds that
\begin{align}\label{mean}
\sum_{j=1}^n A_j\sigma B_j \leq \left(\sum_{j=1}^nA_j\right)\sigma \left(\sum_{j=1}^nB_j\right)\,.
\end{align}
We need a reverse of inequality \eqref{mean}.

There is an effective method for finding inverses of some operator inequalities. It was introduced for investigation of converses of the Jensen inequality associated with convex functions and has been shown that the problem of determining multiple or additive complementary inequalities is reduced to solving a single variable maximization or minimization problem, see \cite{abc} and \cite{MT} and references therein. This method sometimes gives also a unified view to several different operator inequalities and can be applied for the study of the Hadamard product, operator means, positive linear maps and other topics in the framework of operator inequalities; cf. \cite{abc2}. We explain it briefly for the operator Choi-Davis-Jensen inequality. It says that if $f$ is an operator concave function on an interval $J$ and $\Phi: \mathbb{B}(\mathscr{H}) \to \mathbb{B}(\mathscr{K})$ is a unital positive linear map, then $f(\Phi(A))\geq \Phi(f(A))$ for all self-adjoint operators $A$ with spectrum in $J$. We need the next result appeared in \cite[Chapter 2]{abc} in some general forms. We state a sketch of its proof for the reader convenience. Incidentally we explain the essence of the Mond--Pe\v{c}ari\'c method.
\begin{theorem}
Let $f$ be a strictly positive concave function on an interval $[m,M]$ with $0<m<M$ and let $\Phi$ be a unital positive linear map. Then
\begin{eqnarray}\label{mond1}
\gamma \Phi(f(A)) \geq f(\Phi(A))
\end{eqnarray}
for all self-adjoint operators $A \in {\mathbb B}({\mathscr H})$ with spectrum in $[m,M]$, where $\mu_f=\frac{f(M)-f(m)}{M-m}$, $\nu_f=\frac{Mf(m)-mf(M)}{M-m}$ and $\gamma=\max\left\{\frac{f(t)}{\mu_f t+\nu_f}: m\leq t\leq M\right\}$.
\end{theorem}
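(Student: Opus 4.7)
The plan is to run the standard two-sided sandwich argument of the Mond--Pe\v{c}ari\'c method: control $f$ from below by its chord across $[m,M]$, then from above by a scalar multiple of that same chord, and finally interlock the two via the unital positive linear map $\Phi$.

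First I would observe that since $f$ is concave on $[m,M]$, it lies above the straight line joining $(m,f(m))$ to $(M,f(M))$, i.e.
\begin{equation*}
f(t)\ge \mu_f\,t+\nu_f\qquad (t\in[m,M]).
\end{equation*}
Applying the Borel functional calculus to the self-adjoint operator $A$ (whose spectrum lies in $[m,M]$) yields $f(A)\ge \mu_f A+\nu_f I$, and because $\Phi$ is positive and unital, one obtains
\begin{equation*}
\Phi(f(A))\ \ge\ \mu_f\,\Phi(A)+\nu_f I.
\end{equation*}

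For the reverse direction, the definition of $\gamma$ gives the pointwise scalar bound $f(t)\le \gamma(\mu_f t+\nu_f)$ on $[m,M]$; note that the chord $\mu_f t+\nu_f$ is strictly positive there because it interpolates the two positive values $f(m)$ and $f(M)$, so the quotient defining $\gamma$ makes sense and is finite. Since $\Phi(A)$ is self-adjoint with spectrum contained in $[m,M]$ (by unitality and positivity of $\Phi$), another appeal to the functional calculus produces
\begin{equation*}
f(\Phi(A))\ \le\ \gamma\bigl(\mu_f\,\Phi(A)+\nu_f I\bigr).
\end{equation*}

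It now only remains to splice the two estimates. Because $\gamma\ge 1$ (which follows at once from the chord inequality $f(t)\ge \mu_f t+\nu_f>0$), multiplying the first bound through by $\gamma$ preserves its direction, giving $\gamma\Phi(f(A))\ge \gamma(\mu_f\Phi(A)+\nu_f I)\ge f(\Phi(A))$, which is \eqref{mond1}. There is no real obstacle here; the only subtle point worth making explicit is the positivity of the chord on $[m,M]$ (so that the max defining $\gamma$ is attained at a finite value), and the observation that the constant $\gamma$ is sharp precisely because it equals the maximum of $f/(\mu_f t+\nu_f)$, so the reverse inequality is best possible of this affine-interpolation type.
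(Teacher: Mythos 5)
Your proof is correct and follows essentially the same route as the paper's: bound $f$ below by its chord $\mu_f t+\nu_f$, push this through $\Phi$ via the functional calculus, and then use the definition of $\gamma$ to bound $f(\Phi(A))$ above by $\gamma(\mu_f\Phi(A)+\nu_f I)$. The extra details you supply (positivity of the chord on $[m,M]$, the spectral inclusion $\sigma(\Phi(A))\subseteq[m,M]$) are exactly the points the paper glosses over; note only that the final splicing step needs just $\gamma>0$, not $\gamma\ge 1$.
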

\begin{proof}
Since $f$ is concave we have $f(t)\geq \mu_f t+\nu_f$ for all $t\in[m,M]$. It follows from the continuous functional calculus that $f(A)\geq \mu_f A+\nu_f$ and so $\Phi(f(A))\geq \mu_f \Phi(A)+\nu_f$ for all self-adjoint operators $A$ with spectrum in $[m,M]$. To prove \eqref{mond1}, it therefore is enough to find a scalar $\gamma$ such that show that $\gamma (\mu_f \Phi(A)+\nu_f) \geq f(\Phi(A))$, or by the functional calculus it is sufficient to show that $\gamma (\mu_f t+\nu_f) \geq f(t)$ for all $t\in [m,M]$. Thus $\gamma$ should be $\max\left\{\frac{f(t)}{\mu_f t+\nu_f}: m\leq t\leq M\right\}$, which can be found by maximizing the one variable function $\frac{f(t)}{\mu_f t+\nu_f}$ by usual calculus computations. One should note that there is no $t\geq m$ such that $\mu_f t+\nu_f=0$.
\end{proof}
In the above theorem, if we put $\Phi(X):=\Psi(A)^{-1/2}\Psi(A^{1/2}XA^{1/2})\Psi(A)^{-1/2}$, where $\Psi$ is an arbitrary unital positive linear map and take $f$ to be the representing function of an operator mean $\sigma$, then we reach the inequality
\begin{eqnarray}\label{mond2}
\max\left\{\frac{f(t)}{\mu_f t+\nu_f}: m\leq t\leq M\right\} \Psi(A\sigma B) \geq \Psi(A)\sigma \Psi(B)
\end{eqnarray}
whenever $0\leq mA\leq B\leq MA$.\\
Finally if we take $\Psi$ in \eqref{mond2} to be the positive linear map defined on the diagonal blocks of operators by $\Psi({\rm diag}(A_1, \cdots, A_n))=\frac{1}{n}\sum_{j=1}^nA_j$, then
\begin{eqnarray}\label{main}
\gamma \sum_{j=1}^n A_j\sigma B_j \geq \left(\sum_{j=1}^nA_j\right)\sigma \left(\sum_{j=1}^nB_j\right)\,\, {\rm ~with~} \gamma=\max\left\{\frac{f(t)}{\mu_f t+\nu_f}: m\leq t\leq M\right\}\,\,
\end{eqnarray}
for any positive operators $0< mA_j\leq B_j\leq MA_j\,\,(1\leq j\leq n)$.
If $\sigma=\sharp_\alpha\,\,(\alpha\in[0,1])$, then we reach the following inequality appeared in \cite{Bour}
\begin{align*}
\frac{\alpha^\alpha (M-m) (Mm^\alpha-mM^\alpha)^{\alpha-1}}{(1-\alpha)^{\alpha-1}(M^\alpha-m^\alpha)^{\alpha}} \sum_{j=1}^n A_j\sharp_\alpha B_j \geq \left(\sum_{j=1}^nA_j\right)\sharp_\alpha \left(\sum_{j=1}^nB_j\right)\,.
\end{align*}
In particular, for $\sigma=\sharp=\sharp_{1/2}$ we have the following result due to Lee \cite{LEE}
\begin{align}\label{mond2345}
 \frac{\sqrt{M}+\sqrt{m}}{2\sqrt[4]{Mm}} \sum_{j=1}^n A_j\sharp B_j \geq \left(\sum_{j=1}^nA_j\right)\sharp \left(\sum_{j=1}^nB_j\right)\,.
\end{align}

We are ready to prove our main result of this section, which gives a reverse of double inequality \eqref{caleba21}.
\begin{theorem}\label{plm1}
 Let $0<m A_j \leq B_j \leq M A_j\,\,(1\leq j\leq n)$ and $\sigma$ be a mean with the representing function $f$. Then
\begin{align}\label{mo-mo333}
 \sqrt{\gamma\zeta}\left[\left(\sum_{ j=1}^n A_j \sigma B_j\right)\sharp\left(\sum_{ j=1}^n A_j \sigma^{\bot} B_j\right)\right]\geq  \left( \sum_{ j=1}^n A_j\right)\sharp \left( \sum_{ j=1}^nB_j \right)
\end{align}
where
\begin{eqnarray*}
\mu_f=\frac{f(M)-f(m)}{M-m},\quad \nu_f=\frac{Mf(m)-mf(M)}{M-m}
\end{eqnarray*}
\begin{eqnarray}\label{zeta}
\gamma=\displaystyle{\max_{m\leq t\leq M}} \frac{f(t)}{\mu_f t+\nu_f}\quad{\rm~and~}\quad\zeta=\displaystyle{\max_{m\leq t\leq M}} \frac{f(M)f(m)t}{f(t)(\nu_f t+Mm\mu_f)}.
\end{eqnarray}
In addition,
\begin{align*}
 \frac{\sqrt{M}+\sqrt{m}}{2\sqrt[4]{Mm}}\sum_{ j=1}^n \left(A_j\sharp B_j\right) \geq \left[\left(\sum_{ j=1}^n A_j \sigma B_j\right)\sharp\left(\sum_{ j=1}^n A_j \sigma^{\bot} B_j\right)\right]\,.
\end{align*}
\end{theorem}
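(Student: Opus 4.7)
The plan is to derive the main inequality \eqref{mo-mo333} from two applications of the Mond--Pe\v{c}ari\'c-type bound \eqref{main}---one for $\sigma$, one for its dual $\sigma^{\perp}$---glued together by the identity
\[
(X\sigma Y)\sharp(X\sigma^{\perp} Y)=X\sharp Y,\qquad X,Y\in\mathbb{B}(\mathscr{H})_+,
\]
which follows from the congruence invariance $C(X\sharp Y)C=(CXC)\sharp(CYC)$ of the geometric mean (take $C=X^{1/2}$) together with the commuting functional-calculus identity $f(T)\sharp(T/f(T))=T^{1/2}$ for $T=X^{-1/2}YX^{-1/2}$.

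First I would apply \eqref{main} directly to $\sigma$ to obtain
\[
\gamma\sum_{j=1}^{n}A_j\sigma B_j\ \geq\ \Bigl(\sum_{j=1}^{n}A_j\Bigr)\sigma\Bigl(\sum_{j=1}^{n}B_j\Bigr).
\]
Then I would repeat the argument with $\sigma$ replaced by $\sigma^{\perp}$, whose representing function is $g(t)=t/f(t)$. A short computation yields $\mu_g=\nu_f/(f(M)f(m))$ and $\nu_g=Mm\,\mu_f/(f(M)f(m))$, which makes the quantity $\max_{m\leq t\leq M}g(t)/(\mu_g t+\nu_g)$ coincide with $\zeta$ as defined in \eqref{zeta}; hence
\[
\zeta\sum_{j=1}^{n}A_j\sigma^{\perp} B_j\ \geq\ \Bigl(\sum_{j=1}^{n}A_j\Bigr)\sigma^{\perp}\Bigl(\sum_{j=1}^{n}B_j\Bigr).
\]

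Next I would combine these two estimates by applying the identity above with $X=\sum A_j$, $Y=\sum B_j$, and then invoking the joint monotonicity of $\sharp$ together with the positive-scalar homogeneity $(\alpha X)\sharp(\beta Y)=\sqrt{\alpha\beta}\,(X\sharp Y)$ for $\alpha,\beta>0$. This produces
\[
\Bigl(\sum A_j\Bigr)\sharp\Bigl(\sum B_j\Bigr)\leq\Bigl[\gamma\sum A_j\sigma B_j\Bigr]\sharp\Bigl[\zeta\sum A_j\sigma^{\perp}B_j\Bigr]=\sqrt{\gamma\zeta}\,\Bigl[\sum A_j\sigma B_j\Bigr]\sharp\Bigl[\sum A_j\sigma^{\perp}B_j\Bigr],
\]
which is exactly \eqref{mo-mo333}.

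For the supplementary inequality no further machinery is needed: the middle piece of Callebaut's inequality \eqref{caleba21} gives
\[
\Bigl[\sum A_j\sigma B_j\Bigr]\sharp\Bigl[\sum A_j\sigma^{\perp}B_j\Bigr]\ \leq\ \Bigl(\sum A_j\Bigr)\sharp\Bigl(\sum B_j\Bigr),
\]
and then Lee's estimate \eqref{mond2345} dominates the right-hand side by $\frac{\sqrt M+\sqrt m}{2\sqrt[4]{Mm}}\sum A_j\sharp B_j$; concatenating yields the claim. The only genuinely delicate step is the correct identification of $\zeta$ as the Mond--Pe\v{c}ari\'c constant for $\sigma^{\perp}$; once the pair $(\mu_g,\nu_g)$ is computed in terms of $(\mu_f,\nu_f)$, the remainder is bookkeeping with monotonicity and homogeneity of the geometric mean.
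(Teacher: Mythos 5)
Your proposal is correct and follows essentially the same route as the paper: the identity $(X\sigma Y)\sharp(X\sigma^{\perp}Y)=X\sharp Y$, two applications of \eqref{main} (one for $\sigma$, one for $\sigma^{\perp}$), and then monotonicity plus homogeneity of $\sharp$, with the supplementary inequality obtained by chaining \eqref{mond2345} and \eqref{caleba21}. The one place you go beyond the paper is the explicit computation $\mu_g=\nu_f/(f(M)f(m))$, $\nu_g=Mm\,\mu_f/(f(M)f(m))$ identifying $\zeta$ as the Mond--Pe\v{c}ari\'c constant for $\sigma^{\perp}$, a verification the paper leaves implicit.
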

\begin{proof}
 Since $f(t)\sharp f(t)^\bot=\sqrt{f(t)\frac{t}{f(t)}}=\sqrt{t}$, we get
 \begin{align}\label{aremet}
(A\sigma B)\sharp(A\sigma^\bot B)= A\sharp B
\end{align}
for all positive operators $A, B$; cf. \cite{ando}. It follows from \eqref{main} that
\begin{align*}
\gamma\sum_{ j=1}^n (A_j\sigma B_j)
 \geq\left(\sum_{j=1}^n A_j \right)\sigma \left(\sum_{j=1}^n B_j\right)
\end{align*}
and
\begin{align*}
\zeta\sum_{j=1}^n (A_j\sigma^\bot B_j)
 \geq\left(\sum_{j=1}^n A_j \right)\sigma^\bot \left(\sum_{j=1}^n B_j\right)\,,
\end{align*}
where $\gamma$ and $\zeta$ are defined by \eqref{zeta}.
It follows from the property (i) of the mean that
\begin{align*}
 \left(\gamma\sum_{j=1}^n A_j \sigma B_j\right)\sharp\left(\zeta\sum_{j=1}^n A_j \sigma^{\bot} B_j\right) \geq\left( \sum_{j=1}^n A_j\sigma\sum_{ i=1}^n B_j \right)\sharp\left( \sum_{j=1}^n A_j\sigma^\bot\sum_{j=1}^n B_j\right)\,.
\end{align*}
Now equality \eqref{aremet} yields that
\begin{align*}
\sqrt{\gamma\zeta} \left[\left(\sum_{ j=1}^n A_j \sigma B_j\right)\sharp\left(\sum_{ j=1}^n A_j \sigma^{\bot} B_j\right) \right]\geq \left( \sum_{ j=1}^n A_j\right)\sharp \left( \sum_{ j=1}^nB_j \right)\,.
\end{align*}
Finally we have
 \begin{align*}
 \frac{\sqrt{M}+\sqrt{m}}{2\sqrt[4]{Mm}}\sum_{ j=1}^n \left(A_j\sharp B_j\right)&\geq\left(\sum_{ j=1}^n A_j\right)\sharp \left(\sum_{ j=1}^nB_j\right)\qquad\qquad\qquad\qquad(\textrm{by~} \eqref{mond2345})\\&\geq\left(\sum_{ j=1}^n A_j \sigma B_j\right)\sharp\left(\sum_{ j=1}^n A_j \sigma^\perp B_j\right)\,.\qquad\qquad(\textrm{by} \eqref{caleba21}).
\end{align*}
\end{proof}
\begin{remark}
Applying \eqref{mond2345} and \eqref{caleba21} we get the following inequality
\begin{align}\label{mo-mo3335}
 \frac{\sqrt{M}+\sqrt{m}}{2\sqrt[4]{Mm}}\left[\left(\sum_{ j=1}^n A_j \sigma B_j\right)\sharp\left(\sum_{ j=1}^n A_j \sigma^{\bot} B_j\right)\right]\geq  \left( \sum_{ j=1}^n A_j\right)\sharp \left( \sum_{ j=1}^nB_j \right),
\end{align}
 where $0<m A_j \leq B_j \leq M A_j\,\,(1\leq j\leq n)$. Now, if we consider the operator function $f(t)=\frac{1+t}{2}$ corresponding to the arithmetic mean, $M=4$ and $m=1$ in \eqref{zeta}, then we observe that
\begin{eqnarray*}
\gamma=\displaystyle{\max_{1\leq t\leq 4}}\frac{{1+t}}{2(\mu_f t+\nu_f)}=1\neq \frac{10}{9}=\displaystyle{\max_{1\leq t\leq 4}}\frac{2f(4)f(1)t}{{(1+t)}(\nu_f t+4\mu_f)}=\zeta.
\end{eqnarray*}
$\sqrt{\gamma\zeta}=\frac{\sqrt{10}}{3}< \frac{3}{2\sqrt{2}}=\frac{\sqrt{M}+\sqrt{m}}{2\sqrt[4]{Mm}}$.
\end{remark}
Using Theorem \ref{plm1} for the function $f(t)=\frac{1+t}{2}$, due to $\gamma=\displaystyle{\max_{m\leq t\leq M}} \frac{f(t)}{\mu_f t+\nu_f}=1$ and $\zeta=\displaystyle{\max_{m\leq t\leq M}} \frac{f(M)f(m)t}{f(t)(\nu_f t+Mm\mu_f)}=\frac{(1+M)(1+m)}{(1+\sqrt{Mm})^2}$ we obtain the following operator version of the reverse Milne inequality.
\begin{corollary}\label{cor2}
 Let $0<m A_j \leq B_j \leq M A_j\,\,(1\leq j\leq n)$. Then
\begin{align}\label{mos-moj}
 \frac{\sqrt{M}+\sqrt{m}}{2\sqrt[4]{Mm}}\sum_{ j=1}^n \left(A_j\sharp B_j\right)&\geq\left[\left(\sum_{j=1}^n A_j \nabla B_j\right)\sharp\left(\sum_{j=1}^n A_j ! B_j\right)\right] \nonumber\\&\geq \frac{1+\sqrt{mM}}{\sqrt{(1+M)(1+m)}}\left( \sum_{ j=1}^n A_j\right)\sharp \left( \sum_{ j=1}^nB_j \right).
\end{align}
\end{corollary}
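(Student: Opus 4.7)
The plan is to specialize Theorem \ref{plm1} to the arithmetic mean $\sigma = \nabla$, whose dual is the harmonic mean $\sigma^\perp = {!}$, and read off both inequalities of the corollary from the two conclusions of that theorem. The representing function of $\nabla$ is $f(t)=(1+t)/2$, which is strictly positive and concave on $[m,M]$, so the hypotheses of Theorem \ref{plm1} are automatically met, and the whole task reduces to evaluating the scalars $\mu_f,\nu_f,\gamma,\zeta$ for this specific $f$.

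First I would compute the affine parameters. A direct calculation gives $\mu_f = (f(M)-f(m))/(M-m) = 1/2$ and $\nu_f = (Mf(m)-mf(M))/(M-m) = 1/2$, so the denominator appearing in $\gamma$ equals $(t+1)/2 = f(t)$ identically on $[m,M]$. Therefore $\gamma = 1$. With this value in hand, the ``In addition'' clause of Theorem \ref{plm1}, applied with $\sigma = \nabla$ and $\sigma^\perp = {!}$, immediately yields the left inequality
\begin{align*}
\frac{\sqrt{M}+\sqrt{m}}{2\sqrt[4]{Mm}}\sum_{j=1}^n (A_j\sharp B_j) \geq \left(\sum_{j=1}^n A_j\nabla B_j\right)\sharp\left(\sum_{j=1}^n A_j\,{!}\,B_j\right),
\end{align*}
with no further computation needed.

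For the right-hand inequality I would turn to the main inequality \eqref{mo-mo333}. Substituting $f(M)=(1+M)/2$, $f(m)=(1+m)/2$, $\mu_f=\nu_f=1/2$ into the formula \eqref{zeta} for $\zeta$ simplifies the quotient inside the max to $(1+M)(1+m)\,t/[(1+t)(t+Mm)]$. The one-variable maximization reduces to locating the critical point of $g(t) = t/[(1+t)(t+Mm)]$. Setting $g'(t)=0$ gives the clean equation $t^2 = Mm$, so the interior critical point is $t=\sqrt{Mm}\in[m,M]$ by AM--GM, and evaluation yields $g(\sqrt{Mm}) = 1/(1+\sqrt{Mm})^2$, hence $\zeta = (1+M)(1+m)/(1+\sqrt{Mm})^2$. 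Consequently $\sqrt{\gamma\zeta} = \sqrt{(1+M)(1+m)}/(1+\sqrt{Mm})$, and rearranging inequality \eqref{mo-mo333} gives exactly the right half of the corollary.

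No part of this argument presents a genuine obstacle: Theorem \ref{plm1} already encapsulates the operator-theoretic content, and the corollary is a bookkeeping exercise in tracking constants for the dual pair $(\nabla,{!})$. The closest thing to a pitfall is checking that the critical point of $g$ is indeed the global maximum on $[m,M]$; this can be verified either by a sign analysis of $g'$ or by comparing $g(\sqrt{Mm})$ to the endpoint values $g(m)$ and $g(M)$, which is routine.
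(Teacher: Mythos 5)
Your proposal is correct and follows exactly the paper's route: the paper derives the corollary by applying Theorem \ref{plm1} with $f(t)=\tfrac{1+t}{2}$, noting $\gamma=1$ and $\zeta=\tfrac{(1+M)(1+m)}{(1+\sqrt{Mm})^2}$ (your maximization at $t=\sqrt{Mm}$ is the computation the paper leaves implicit), and reading the left inequality off the ``In addition'' clause. The only cosmetic remark is that the ``In addition'' clause holds for any mean $\sigma$ and does not actually require the value of $\gamma$.
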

Now, we show a reverse of \eqref{caleba223} under some mild conditions. First we need the following lemma.
\begin{lemma}\label{fujii123}
Let $$H_{r,t}(x)=\frac{F_{r,t}(x)}{F_{r,1-t}(x)}\qquad(x>0,r\in[-1,1],0\leq t\leq1).$$ Then for a fixed $r$, $H_{r,t}$ is decreasing  for $t\in[0,\frac{1}{2}]$ and increasing  for  $t\in[\frac{1}{2},1]$.
\end{lemma}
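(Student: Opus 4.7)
The plan is to establish the monotonicity of $H_{r,t}$ as a function of its argument $x>0$ by a direct logarithmic-derivative computation, treating $r$ and $t$ as parameters. Writing
\[
H_{r,t}(x)=\frac{(1-t+tx^{r})^{1/r}}{(t+(1-t)x^{r})^{1/r}},
\]
I take logarithms and differentiate in $x$:
\[
\frac{d}{dx}\ln H_{r,t}(x)=\frac{1}{r}\left[\frac{trx^{r-1}}{1-t+tx^{r}}-\frac{(1-t)rx^{r-1}}{t+(1-t)x^{r}}\right].
\]

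The key step is to simplify the bracket. The outer factor $1/r$ cancels the $r$'s produced by differentiating $x^{r}$, and combining the two fractions over the common denominator $(1-t+tx^{r})(t+(1-t)x^{r})$ yields the numerator $x^{r-1}\bigl[t(t+(1-t)x^{r})-(1-t)(1-t+tx^{r})\bigr]$. The cross terms $\pm t(1-t)x^{r}$ inside the bracket cancel exactly, reducing it to $t^{2}-(1-t)^{2}=2t-1$. Therefore
\[
\frac{d}{dx}\ln H_{r,t}(x)=\frac{(2t-1)\,x^{r-1}}{(1-t+tx^{r})(t+(1-t)x^{r})}.
\]
For $x>0$ and any $r\in[-1,1]\setminus\{0\}$, both $x^{r-1}$ and the denominator are strictly positive, so the sign of this derivative coincides with the sign of $2t-1$. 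This gives exactly the claim: $H_{r,t}$ is (weakly) decreasing on $(0,\infty)$ when $t\in[0,1/2]$ and (weakly) increasing when $t\in[1/2,1]$, with equality to the constant function $1$ at $t=1/2$.

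The borderline case $r=0$, where $F_{r,t}$ is understood as its limit $F_{0,t}(x)=x^{t}$, is handled separately and trivially: $H_{0,t}(x)=x^{2t-1}$, whose monotonicity in $x$ again agrees with the sign of $2t-1$. The only real risk in the plan is an algebraic slip in cancelling the $x^{r}$ cross-terms; I would cross-check by verifying that $H_{r,t}(1)=1$ for every admissible $r$ and $t$, and by testing a numerical instance such as $r=1$, $t=1/4$, where $H_{1,1/4}(x)=(3+x)/(1+3x)$ is manifestly decreasing in $x$, in accord with $2t-1=-1/2<0$.
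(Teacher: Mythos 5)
Your proposal is correct and follows essentially the same route as the paper: both differentiate $H_{r,t}$ with respect to $x$ (you via the logarithmic derivative, the paper via the quotient rule directly), arrive at a derivative whose sign is that of $(2t-1)x^{r-1}$ divided by a positive quantity, and dispose of the $r=0$ case separately. The algebraic cancellation you flag as the main risk checks out, so no gap remains.
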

\begin{proof}
The case when $r=0$ is clear. Let $r\in[-1,1]-\{0\}$. It follows from $$\frac{d}{dx}\left(H_{r,t}(x)\right)=\left(\frac{(1-t)+tx^r}{t+(1-t)x^r}\right)^{\frac{1}{r}-1}\frac{x^{r-1}(2t-1)}{(t+(1-t)x^r)^2}$$ that $\frac{d}{dx}\left(H_{r,t}(x)\right)\leq0$ for $t\in[0,\frac{1}{2}]$ and $\frac{d}{dx}\left(H_{r,t}(x)\right)\geq0$ for  $t\in[\frac{1}{2},1]$.
Therefore $H_{r,t}(x)$ is  decreasing  for $t\in[0,\frac{1}{2}]$ and is increasing  for  $t\in[\frac{1}{2},1]$.
\end{proof}
\begin{theorem}\label{wewew1}
Let $0<{m} A_j \leq B_j \leq {M} A_j\,\,(1\leq j\leq n)$, $r\in[-1,1]$ and $t\in[0,1]$.  Then
\begin{align}\label{34es}
 \sqrt{\gamma\zeta}\Big[\Big(\sum_{ j=1}^n \left(A_j m_{r,s} B_j\right)\Big)\sharp\Big(\sum_{ j=1}^n &\left(A_jm_{r,1-s} B_j\right)\Big)\Big]\nonumber\\&\geq\left(\sum_{ j=1}^n A_j m_{r,t} B_j\right)\sharp\left(\sum_{ j=1}^n A_j m_{r,1-t} B_j\right),
\end{align}
where $s=s_0t+(1-s_0)(1-t)$ for some $s_0\in[0,1]$ is any number between $t$ and $1-t$,
\begin{eqnarray*}
\mu_{r,s_0}=\frac{F_{r,s_0}(H_{r,t}(M))-F_{r,s_0}(H_{r,t}(m))}{H_{r,t}(M)-H_{r,t}(m)}\,,
\end{eqnarray*}
\begin{eqnarray*}
\nu_{r,s_0}=\frac{H_{r,t}(M)F_{r,s_0}(H_{r,t}(m))-H_{r,t}(m)F_{r,s_0}(H_{r,t}(M))}{H_{r,t}(M)-H_{r,t}(m)}\,,
\end{eqnarray*}
\begin{eqnarray*}
\gamma=\max\left\{\frac{F_{r,s_0}(x)}{\mu_{r,s_0} x+\nu_{r,s_0}}: x {\rm ~is~between~} H_{r,t}(m){\rm~and~} H_{r,t}(M)\right\}
\end{eqnarray*}
and
{\footnotesize \begin{eqnarray*}
\zeta=\max\left\{\frac{F_{r,s_0}(H_{r,t}(M))F_{r,s_0}(H_{r,t}(m))x}{F_{r,s_0}(x)(\nu_{r,s_0} x+H_{r,t}(M)H_{r,t}(m)\mu_{r,s_0})}: x {\rm~is~between~}H_{r,t}(m){\rm~and~} H_{r,t}(M)\right\}\,.
\end{eqnarray*}}
\end{theorem}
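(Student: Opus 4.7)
The plan is to mimic the proof of Theorem \ref{plm1} after introducing the auxiliary operators $C_j = A_j m_{r,t} B_j$ and $D_j = A_j m_{r,1-t} B_j$. A short scalar calculation with the representing function $F_{r,t}$ yields the interpolational identity $(A m_{r,p} B) m_{r,r'} (A m_{r,q} B) = A m_{r,(1-r')p + r' q} B$. Setting $p = t$, $q = 1-t$, $r' = s_0$, and recalling $s = s_0 t + (1-s_0)(1-t)$ together with the transpose identity $X m_{r,s_0} Y = Y m_{r,1-s_0} X$, one obtains $D_j m_{r,s_0} C_j = A_j m_{r,s} B_j$ and $D_j m_{r,1-s_0} C_j = A_j m_{r,1-s} B_j$. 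Moreover, writing $X_j = A_j^{-1/2} B_j A_j^{-1/2}$ (so $mI \leq X_j \leq MI$) and using $C_j = A_j^{1/2} F_{r,t}(X_j) A_j^{1/2}$, $D_j = A_j^{1/2} F_{r,1-t}(X_j) A_j^{1/2}$, Lemma \ref{fujii123} (monotonicity of $H_{r,t}$ in $x$) provides the operator sandwich $h_- D_j \leq C_j \leq h_+ D_j$ with $\{h_-, h_+\} = \{H_{r,t}(m), H_{r,t}(M)\}$.

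With this sandwich, I would apply \eqref{main} to the pair $(D_j, C_j)$ on $[h_-, h_+]$ with the mean $m_{r,s_0}$ and then separately with its dual $(m_{r,s_0})^\perp$. The resulting constants are precisely the $\gamma$ and $\zeta$ displayed in the statement, yielding
\begin{align*}
\gamma \sum_{j=1}^n D_j m_{r,s_0} C_j &\geq \Big(\sum_{j=1}^n D_j\Big) m_{r,s_0} \Big(\sum_{j=1}^n C_j\Big), \\
\zeta \sum_{j=1}^n D_j (m_{r,s_0})^\perp C_j &\geq \Big(\sum_{j=1}^n D_j\Big) (m_{r,s_0})^\perp \Big(\sum_{j=1}^n C_j\Big).
\end{align*}
Taking $\sharp$ of both and using monotonicity of $\sharp$, the homogeneity $(\alpha X)\sharp(\beta Y) = \sqrt{\alpha\beta}(X\sharp Y)$, and the identity \eqref{aremet}, the right-hand side collapses to $(\sum_j D_j) \sharp (\sum_j C_j)$, which by symmetry of $\sharp$ equals the right-hand side of \eqref{34es}.

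The main obstacle is that this procedure leaves $\sqrt{\gamma\zeta}\bigl[(\sum_j D_j m_{r,s_0} C_j) \sharp (\sum_j D_j (m_{r,s_0})^\perp C_j)\bigr]$ on the left, whereas \eqref{34es} calls for $\sum A_j m_{r,1-s} B_j = \sum D_j m_{r,1-s_0} C_j$ in the second slot. For $r = 0$ the two coincide since $(m_{0,s_0})^\perp = m_{0,1-s_0}$, and the proof ends immediately. For $r \neq 0$ one must compare the representing functions $x/F_{r,s_0}(x)$ and $F_{r,1-s_0}(x)$: their ratio reduces to $1/(F_{r,s_0}(x) F_{r,s_0}(1/x))$, and an explicit computation determines the direction of the pointwise inequality; monotonicity of $\sharp$ then allows the replacement of the $(m_{r,s_0})^\perp$-summand by the $m_{r,1-s_0}$-summand, producing exactly the left-hand side of \eqref{34es}.
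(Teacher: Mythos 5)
Your route is the same as the paper's: set $C_j=A_jm_{r,t}B_j$ and $D_j=A_jm_{r,1-t}B_j$, use Lemma \ref{fujii123} to obtain the sandwich $h_-D_j\le C_j\le h_+D_j$ with $\{h_-,h_+\}=\{H_{r,t}(m),H_{r,t}(M)\}$, feed this pair into the Mond--Pe\v{c}ari\'c reverse \eqref{main}/\eqref{mo-mo333} with the mean $m_{r,s_0}$ and its dual, and translate back via the interpolational identity \eqref{fujii23} and the transpose identity. Your reading of $\gamma$ and $\zeta$ as the Mond--Pe\v{c}ari\'c constants of $F_{r,s_0}$ and of its Kubo--Ando dual over the interval between $H_{r,t}(m)$ and $H_{r,t}(M)$ is exactly what the paper computes; the paper merely splits into the cases $t\ge\tfrac12$ and $t\le\tfrac12$ and checks, via the substitution $x\mapsto x^{-1}$ and the identities \eqref{O0}--\eqref{02}, that both cases produce the same constants, which you omit but which is routine bookkeeping.

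The obstacle you isolate at the end is the real crux, and your proposed repair does not close it. Indeed $(m_{r,s_0})^\perp$ has representing function $x/F_{r,s_0}(x)=\big(s_0+(1-s_0)x^{-r}\big)^{-1/r}=F_{-r,1-s_0}(x)$, so $(m_{r,s_0})^\perp=m_{-r,1-s_0}$, which equals $m_{r,1-s_0}$ only when $r=0$. To replace the $(m_{r,s_0})^\perp$-summand by the $m_{r,1-s_0}$-summand while preserving the inequality you need $F_{r,1-s_0}(x)\ge x/F_{r,s_0}(x)$, i.e.\ $F_{r,s_0}(x)F_{r,1-s_0}(x)\ge x$. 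Writing $y=x^r$ one has $(1-s_0+s_0y)(s_0+(1-s_0)y)=y+s_0(1-s_0)(1-y)^2\ge y$, and raising to the power $1/r$ preserves this for $r>0$ but reverses it for $r<0$; hence the pointwise comparison goes the right way only for $r\ge0$, and for $r\in[-1,0)$ the substitution degrades the left-hand side in the wrong direction, so the argument as sketched fails there. You should be aware that the paper's own proof does not confront this at all: it invokes \eqref{mo-mo333} with $m_{r,1-s_0}$ written where the dual $m_{-r,1-s_0}$ belongs, so that the exact collapse \eqref{aremet} used on the right-hand side is not actually available for $r\ne0$. In other words, your proposal and the published proof coincide up to this step, you are the one who noticed the step is problematic, and your patch legitimately repairs it for $r\in(0,1]$ but not for $r\in[-1,0)$; only the $r=0$ specialization (the corollary following the theorem) is unconditionally covered.
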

\begin{proof}
Assume that $t\in[\frac{1}{2},1]$. It follows from $0<m A_j \leq B_j \leq M A_j\,\,(1\leq j\leq n)$ that  $m\leq A_j^{-1/2}B_jA_j^{-1/2}\leq M\,\,(1\leq j\leq n)$.
 Using Lemma \ref{fujii123} we have
 \begin{align*}
 H_{r,t}(m)\leq H_{r,t}\left(A_j^{-1/2}B_jA_j^{-1/2}\right)\leq H_{r,t}(M)\qquad(-1\leq r\leq1,\,1\leq j\leq n).
 \end{align*}
  So
{\footnotesize\begin{align*}
H_{r,t}(m)F_{r,1-t}\left(A_j^{-1/2}B_jA_j^{-1/2}\right)\leq F_{r,t}\left(A_j^{-1/2}B_jA_j^{-1/2}\right)\leq H_{r,t}(M)F_{r,1-t}\left(A_j^{-1/2}B_jA_j^{-1/2}\right),
\end{align*}}
where $-1\leq r \leq1$ and $1\leq j\leq n$. Multiplying both sides  by $A^{\frac{1}{2}}$ we reach
\begin{align*}
H_{r,t}(m)\left(A_jm_{r,1-t}B_j\right)\leq  A_jm_{r,t} B_j\leq H_{r,t}(M)\left(A_jm_{r,1-t}B_j\right)\,\,(-1\leq r\leq1,\,1\leq j\leq n).
\end{align*}
 Let $s$ be any number between $1-t$ and $t$. So $s=s_0t+(1-s_0)(1-t)$ for some $s_0\in[0,1]$. Using inequality \eqref{mo-mo333} we get
{\footnotesize \begin{align*}
&\Big(\sum_{ j=1}^n A_j m_{r,t} B_j\Big)\sharp\Big(\sum_{ j=1}^n A_j m_{r,1-t} B_j\Big)\\
&\leq  \sqrt{\gamma\zeta}\left[\left(\sum_{ j=1}^n \left((A_j m_{r,t} B_j)m_{r,s_0}(A_j m_{r,1-t} B_j)\right)\right)\sharp\left(\sum_{ j=1}^n \left((A_j m_{r,t} B_j)m_{r,1-s_0}(A_j m_{r,1-t} B_j)\right)\right)\right]\\
&=\sqrt{\gamma\zeta}\left[\left(\sum_{ j=1}^n \left(A_j m_{r,ts_0+(1-t)({1-s_0})} B_j\right)\right)\sharp\left(\sum_{ j=1}^n \left(A_jm_{r,1-(ts_0+(1-t)(1-s_0))} B_j\right)\right)\right]\\&\qquad\qquad\qquad\qquad\qquad\qquad\qquad\qquad\qquad\qquad\qquad\qquad\qquad\qquad\qquad(\textrm {by} ~\eqref{fujii23})\\&=\sqrt{\gamma\zeta}\Big[\Big(\sum_{ j=1}^n \left(A_j m_{r,s} B_j\right)\Big)\sharp\Big(\sum_{ j=1}^n \left(A_jm_{r,1-s} B_j\right)\Big)\Big].
\end{align*}}
Next, assume that $t\in[0,\frac{1}{2}]$. It follows from $0<m A_j \leq B_j \leq M A_j\,\,(1\leq j\leq n)$ that  $m\leq A_j^{-1/2}B_jA_j^{-1/2}\leq M\,\,(1\leq j\leq n)$.
 Using Lemma \ref{fujii123} we have
 \begin{align*}
 H_{r,t}(M)\leq H_{r,t}\left(A_j^{-1/2}B_jA_j^{-1/2}\right)\leq H_{r,t}(m)\qquad(-1\leq r\leq1,\,1\leq j\leq n).
 \end{align*}
  So
{\footnotesize\begin{align*}
H_{r,t}(M)F_{r,1-t}\left(A_j^{-1/2}B_jA_j^{-1/2}\right)\leq F_{r,t}\left(A_j^{-1/2}B_jA_j^{-1/2}\right)\leq H_{r,t}(m)F_{r,1-t}\left(A_j^{-1/2}B_jA_j^{-1/2}\right),
\end{align*}}
  where $-1\leq r \leq1$ and $1\leq j\leq n$.
 Hence {\footnotesize\begin{align*}
\frac{1}{H_{r,t}(m)}F_{r,t}\left(A_j^{-1/2}B_jA_j^{-1/2}\right)\leq F_{r,1-t}\left(A_j^{-1/2}B_jA_j^{-1/2}\right)\leq \frac{1}{H_{r,t}(M)}F_{r,t}\left(A_j^{-1/2}B_jA_j^{-1/2}\right),
\end{align*}}
where $-1\leq r \leq1$ and $1\leq j\leq n$.
Multiplying both sides  by $A^{\frac{1}{2}}$ we reach
\begin{align*}
\frac{1}{H_{r,t}(m)}\left(A_jm_{r,t}B_j\right)\leq  A_jm_{r,1-t} B_j\leq \frac{1}{H_{r,t}(M)}\left(A_jm_{r,t}B_j\right)\,\,(-1\leq r\leq1,\,1\leq j\leq n).
\end{align*}
 Let $s$ be any number between $t$ and $1-t$. So $s=(1-s_0)t+s_0(1-t)$ for some $s_0\in[0,1]$.
 It follows from
\begin{align}\label{O0}
F_{r,s_0}(x^{-1})=(1-s_0+s_0x^{-r})^\frac{1}{r}=x^{-1}((1-s_0)x^r+s_0)^\frac{1}{r}=\frac{F_{r,1-s_0}(x)}{x}\,\,(x>0)
 \end{align}
 that
\begin{align}\label{01}
\mu_{r,1-s_0}&=\frac{F_{r,1-s_0}(\frac{1}{H_{r,t}(m)})-F_{r,1-s_0}(\frac{1}{H_{r,t}(M)})}{\frac{1}{H_{r,t}(m)}-\frac{1}{H_{r,t}(M)}}=
\frac{\frac{F_{r,s_0}(H_{r,t}(m))}{H_{r,t}(m)}-\frac{F_{r,s_0}(H_{r,t}(M))}
{H_{r,t}(M)}}{\frac{H_{r,t}(M)-H_{r,t}(m)}{H_{r,t}(M)H_{r,t}(m)}}
\nonumber\\&=\frac{H_{r,t}(M)F_{r,s_0}(H_{r,t}(m))-H_{r,t}(m)F_{r,s_0}(H_{r,t}(M))}{H_{r,t}(M)-H_{r,t}(m)}=\nu_{r,s_0}
 \end{align}
 and
 \begin{align}\label{02}
\nu_{r,1-s_0}&=\frac{\frac{1}{H_{r,t}(m)}F_{r,1-s_0}(\frac{1}{H_{r,t}(M)})-\frac{1}{H_{r,t}(M)}F_{r,1-s_0}(\frac{1}{H_{r,t}(m)})}
{\frac{1}{H_{r,t}(m)}-\frac{1}{H_{r,t}(M)}}\nonumber\\&=
\frac{F_{r,s_0}({H_{r,t}(M)})-F_{r,s_0}({H_{r,t}(m)})}{H_{r,t}(M)-H_{r,t}(m)}=\mu_{r,s_0}.
 \end{align}
Therefore
 {\footnotesize \begin{align*}
&\max\left\{\frac{F_{r,1-s_0}(x)}{\mu_{r,1-s_0} x+\nu_{r,1-s_0}}: x {\rm ~is~between~} \frac{1}{H_{r,t}(m)}{\rm~and~} \frac{1}{H_{r,t}(M)}\right\}\\&=\max\left\{\frac{F_{r,1-s_0}(x^{-1})}{\mu_{r,1-s_0} x^{-1}+\nu_{r,1-s_0}}: x {\rm ~is~between~} {H_{r,t}(m)}{\rm~and~} {H_{r,t}(M)}\right\}\\&=\max\left\{\frac{\frac{F_{r,s_0}(x)}{x}}{\nu_{r,s_0} x^{-1}+\mu_{r,s_0}}: x {\rm ~is~between~} {H_{r,t}(m)}{\rm~and~} {H_{r,t}(M)}\right\}\\&\qquad\qquad\qquad\qquad\qquad\qquad\qquad\qquad\qquad\qquad\qquad(\textrm{by}~\eqref{O0}, ~\eqref{01} ~\textrm{and}~\eqref{02} )\\&=\max\left\{\frac{F_{r,s_0}(x)}{\nu_{r,s_0}+\mu_{r,s_0}x}: x {\rm ~is~between~} {H_{r,t}(m)}{\rm~and~} {H_{r,t}(M)}\right\}\\&=\gamma
\end{align*}}
and
{\footnotesize\begin{align*}
&\max\left\{\frac{F_{r,1-s_0}(\frac{1}{H_{r,t}(m)})F_{r,1-s_0}(\frac{1}{H_{r,t}(M)})x}
{F_{r,1-s_0}(x)(\nu_{r,1-s_0} x+\frac{1}{H_{r,t}(M)}\frac{1}{H_{r,t}(m)}\mu_{r,1-s_0})}: x {\rm~is~between~}\frac{1}{H_{r,t}(m)}{\rm~and~} \frac{1}{H_{r,t}(M)}\right\}\\&=\max\left\{\frac{F_{r,1-s_0}(\frac{1}{H_{r,t}(m)})F_{r,1-s_0}(\frac{1}{H_{r,t}(M)})x^{-1}}
{F_{r,1-s_0}(x^{-1})(\nu_{r,1-s_0} x^{-1}+\frac{1}{H_{r,t}(M)}\frac{1}{H_{r,t}(m)}\mu_{r,1-s_0})}: x {\rm~is~between~}{H_{r,t}(m)}{\rm~and~} {H_{r,t}(M)}\right\}\\&=\max\left\{\frac{\frac{F_{r,s_0}(H_{r,t}(m))}{H_{r,t}(m)}\frac{F_{r,s_0}(H_{r,t}(M))}{H_{r,t}(M)}x^{-1}}
{\frac{F_{r,s_0}(x)}{x}(\mu_{r,s_0} x^{-1}+\frac{1}{H_{r,t}(M)}\frac{1}{H_{r,t}(m)}\nu_{r,1-s_0})}: x {\rm~is~between~}{H_{r,t}(m)}{\rm~and~} {H_{r,t}(M)}\right\}\\&\qquad\qquad\qquad\qquad\qquad\qquad\qquad\qquad\qquad\qquad\qquad\qquad\qquad(\textrm{by} ~\eqref{O0},,~\eqref{01} ~\textrm{and}~\eqref{02} )\\&=\max\left\{\frac{F_{r,s_0}(H_{r,t}(M))F_{r,s_0}(H_{r,t}(m))x}{F_{r,s_0}(x)(\nu_{r,s_0} x+H_{r,t}(M)H_{r,t}(m)\mu_{r,s_0})}: x {\rm~is~between~}H_{r,t}(m){\rm~and~} H_{r,t}(M)\right\}\\&=\zeta\,.
\end{align*}}
  Using inequality \eqref{mo-mo333} we get
{\footnotesize \begin{align*}
&\Big(\sum_{ j=1}^n A_j m_{r,t} B_j\Big)\sharp\Big(\sum_{ j=1}^n A_j m_{r,1-t} B_j\Big)\\
&\leq  \sqrt{\gamma\zeta}\left[\left(\sum_{ j=1}^n \left((A_j m_{r,t} B_j)m_{r,1-s_0}(A_j m_{r,1-t} B_j)\right)\right)\sharp\left(\sum_{ j=1}^n \left((A_j m_{r,t} B_j)m_{r,s_0}(A_j m_{r,1-t} B_j)\right)\right)\right]\\
&=\sqrt{\gamma\zeta}\left[\left(\sum_{ j=1}^n \left(A_j m_{r,t(1-s_0)+(1-t){s_0}} B_j\right)\right)\sharp\left(\sum_{ j=1}^n \left(A_jm_{r,1-(t(1-s_0)+(1-t)s_0)} B_j\right)\right)\right]\\&\qquad\qquad\qquad\qquad\qquad\qquad\qquad\qquad\qquad\qquad\qquad\qquad\qquad\qquad\qquad(\textrm {by} ~\eqref{fujii23})\\&=\sqrt{\gamma\zeta}\Big[\Big(\sum_{ j=1}^n \left(A_j m_{r,1-s} B_j\right)\Big)\sharp\Big(\sum_{ j=1}^n \left(A_jm_{r,s} B_j\right)\Big)\Big].
\end{align*}}
\end{proof}
Utilizing Theorem \ref{wewew1} for the special case $r=0$ we get the following result.
\begin{corollary}\label{wewew1}
Let $0<{m} A_j \leq B_j \leq {M} A_j\,\,(1\leq j\leq n)$ and $t\in[0,1]$.  Then
{\footnotesize\begin{align}\label{34es}
&\frac{s_0^{s_0} (M^{2t-1}-m^{2t-1}) (M^{2t-1}m^{s_0{(2t-1)}}-m^{2t-1}
 M^{s_0({2t-1})})^{s_0-1}}{(1-s_0)^{(s_0-1)}(M^{s_0({2t-1})}-m^{s_0({2t-1})})^{s_0}} \Big[\Big(\sum_{ j=1}^n \left(A_j \sharp_{s} B_j\right)\Big)\sharp\Big(\sum_{ j=1}^n \left(A_j\sharp_{1-s} B_j\right)\Big)\Big]\nonumber\\&\quad \geq\left(\sum_{ j=1}^n A_j \sharp_t B_j\right)\sharp\left(\sum_{ j=1}^n A_j \sharp_{1-t} B_j\right),
\end{align}}
where $s=s_0t+(1-s_0)(1-t)$ for some $s_0\in[0,1]$ is any number between $t$ and $1-t$.
In particular, if $t=1$, then
\begin{align}\label{mos-moj}
 \frac{s^s (M-m) (Mm^s-mM^s)^{s-1}}{(1-s)^{s-1}(M^s-m^s)^{s}}&\left[\left(\sum_{j=1}^n A_j \sharp_s B_j\right)\sharp\left(\sum_{j=1}^n A_j \sharp_{1-s} B_j\right)\right] \nonumber\\&\geq \left( \sum_{ j=1}^n A_j\right)\sharp \left( \sum_{ j=1}^nB_j \right).
\end{align}
\end{corollary}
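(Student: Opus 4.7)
The plan is to specialize Theorem \ref{wewew1} to $r=0$, since $m_{0,t}=\sharp_t$ has representing function $F_{0,t}(x)=x^t$ (the $r\to 0$ limit of $(1-t+tx^r)^{1/r}$). Then
\[
H_{0,t}(x)=\frac{F_{0,t}(x)}{F_{0,1-t}(x)}=\frac{x^t}{x^{1-t}}=x^{2t-1},
\]
so $H_{0,t}(m)=m^{2t-1}$ and $H_{0,t}(M)=M^{2t-1}$. Substituting these into the definitions of $\mu_{r,s_0}$ and $\nu_{r,s_0}$ appearing in Theorem \ref{wewew1} yields at once
\[
\mu_{0,s_0}=\frac{M^{s_0(2t-1)}-m^{s_0(2t-1)}}{M^{2t-1}-m^{2t-1}},\qquad \nu_{0,s_0}=\frac{M^{2t-1}m^{s_0(2t-1)}-m^{2t-1}M^{s_0(2t-1)}}{M^{2t-1}-m^{2t-1}}.
\]

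Next I would compute $\gamma$ and $\zeta$ by one-variable calculus. Since $x^{s_0}\geq \mu_{0,s_0} x+\nu_{0,s_0}$ on the relevant interval with equality at the endpoints, $\varphi(x):=x^{s_0}/(\mu_{0,s_0}x+\nu_{0,s_0})$ attains its maximum at the unique interior critical point $x^{*}=s_0\nu_{0,s_0}/((1-s_0)\mu_{0,s_0})$, giving
\[
\gamma=\frac{s_0^{s_0}}{(1-s_0)^{s_0-1}}\cdot\frac{\nu_{0,s_0}^{s_0-1}}{\mu_{0,s_0}^{s_0}}.
\]
A parallel optimization of
\[
\psi(x)=\frac{(Mm)^{s_0(2t-1)}\, x^{1-s_0}}{\nu_{0,s_0}\,x+(Mm)^{2t-1}\mu_{0,s_0}}
\]
has critical point $x^{**}=(1-s_0)(Mm)^{2t-1}\mu_{0,s_0}/(s_0\nu_{0,s_0})$, and upon substitution the prefactor $(Mm)^{s_0(2t-1)}$ cancels exactly against $((Mm)^{2t-1})^{-s_0}$, so that $\zeta=\gamma$. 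Hence $\sqrt{\gamma\zeta}=\gamma$, and inserting the explicit values of $\mu_{0,s_0},\nu_{0,s_0}$ into $\gamma$ and collecting a common factor $M^{2t-1}-m^{2t-1}$ produces precisely the coefficient displayed in \eqref{34es}.

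For the ``in particular'' case $t=1$ we have $H_{0,1}(x)=x$, $s=s_0\cdot 1+(1-s_0)\cdot 0=s_0$, while $A_j\sharp_1 B_j=B_j$ and $A_j\sharp_0 B_j=A_j$, so the right-hand side of \eqref{34es} collapses to $\bigl(\sum_{j=1}^n A_j\bigr)\sharp\bigl(\sum_{j=1}^n B_j\bigr)$ and the coefficient reduces to the one in \eqref{mos-moj}. The main obstacle is recognizing the coincidence $\gamma=\zeta$, which is easy to overlook but becomes transparent via the involution $x\mapsto (Mm)^{2t-1}/x$ that maps one optimization problem onto the other; once this symmetry is in hand, the remaining work is just bookkeeping of exponents.
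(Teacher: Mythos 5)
Your proposal is correct and follows exactly the paper's (unwritten) route: the paper simply declares the corollary to be Theorem \ref{wewew1} with $r=0$, and you carry out that specialization, correctly computing $H_{0,t}(x)=x^{2t-1}$, $\mu_{0,s_0}$, $\nu_{0,s_0}$, and verifying via the interior critical points that $\gamma=\zeta$ equals the displayed constant. Your explicit check of the coincidence $\gamma=\zeta$ (via the involution $x\mapsto (Mm)^{2t-1}/x$) is a detail the paper leaves entirely implicit, so you have in fact supplied more justification than the source.
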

\section{A refienement of the Callebaut inequality}
 In this section, we obtain a refinement of inequality \eqref{34rf} for operators. We need the following lemmas.
\begin{lemma}\label{lemma14}$[$See \cite{mo-mo1}$]$
Let $a, b>0$ and $\nu\not\in[0,1]$. Then
\begin{align*}
(a+b)+2(\nu-1)(\sqrt{a}-\sqrt{b})^2\leq a^{\nu}b^{1-\nu}+b^{\nu}a^{1-\nu}.
\end{align*}
 \end{lemma}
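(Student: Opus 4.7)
The plan is to introduce the single-variable auxiliary function
\[
g(\nu) := a^\nu b^{1-\nu} + b^\nu a^{1-\nu} - (a+b) - 2(\nu-1)(\sqrt{a}-\sqrt{b})^2
\]
and prove $g(\nu)\geq 0$ whenever $\nu\notin[0,1]$. The overall strategy is to combine two cheap boundary evaluations with a short convexity argument, and the only nontrivial step will be a bound on $g'(1)$.

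First, by direct substitution one obtains the anchor values $g(1)=0$ and $g(0)=2(\sqrt{a}-\sqrt{b})^2\geq 0$. Next, differentiating twice in $\nu$ gives
\[
g''(\nu)=(\ln a-\ln b)^2\bigl(a^\nu b^{1-\nu}+b^\nu a^{1-\nu}\bigr)\geq 0,
\]
so $g$ is convex on the whole real line.

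The main step is to verify that $g'(1)\geq 0$, where a one-line computation yields
\[
g'(1)=(\ln a-\ln b)(a-b)-2(\sqrt{a}-\sqrt{b})^2.
\]
Setting $x=\sqrt{a}$, $y=\sqrt{b}$ (and assuming WLOG $x>y$ by symmetry of $g$ in $a,b$), this reduces to showing $(\ln x-\ln y)(x+y)\geq x-y$, i.e.\ the logarithmic mean $L(x,y)=(x-y)/(\ln x-\ln y)$ is at most $x+y$. Since $L(x,y)\leq (x+y)/2$ is the classical logarithmic–arithmetic mean inequality, this holds (with a factor of two to spare). This is the only place where nontrivial input is needed; I expect it to be the sole obstacle in an otherwise routine argument.

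With these pieces in hand the conclusion is immediate from convexity. For $\nu\geq 1$, the tangent-line bound at $\nu=1$ gives $g(\nu)\geq g(1)+g'(1)(\nu-1)=g'(1)(\nu-1)\geq 0$. For $\nu\leq 0$, write the convex combination $0=\tfrac{1}{1-\nu}\,\nu+\tfrac{-\nu}{1-\nu}\cdot 1$; convexity of $g$ together with $g(1)=0$ then yields $g(0)\leq\tfrac{1}{1-\nu}\,g(\nu)$, whence $g(\nu)\geq(1-\nu)\,g(0)\geq 0$. This covers both components of the complement of $[0,1]$ and completes the proof.
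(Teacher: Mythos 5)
Your proof is correct, but it follows a genuinely different route from the paper's. The paper first establishes the reversed Young inequality $\nu a+(1-\nu)b\leq a^{\nu}b^{1-\nu}$ for $\nu\notin[0,1]$ by minimizing the one-variable function $t\mapsto t^{1-\nu}-\nu+(\nu-1)t$ at $t=1$; it then uses the algebraic identity $\nu a+(1-\nu)b+(\nu-1)(\sqrt{a}-\sqrt{b})^2=(2-2\nu)\sqrt{ab}+(2\nu-1)a$ and applies the \emph{same} reversed Young inequality a second time (with exponent $2-2\nu\notin[0,1]$ and arguments $\sqrt{ab}$ and $a$) to bound this by $(\sqrt{ab})^{2-2\nu}a^{2\nu-1}=a^{\nu}b^{1-\nu}$; adding the symmetric counterpart gives the lemma. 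You instead work with convexity in the exponent: $g''(\nu)=(\ln a-\ln b)^2\bigl(a^{\nu}b^{1-\nu}+b^{\nu}a^{1-\nu}\bigr)\geq0$, the anchor values $g(1)=0$ and $g(0)=2(\sqrt{a}-\sqrt{b})^2\geq0$, the tangent-line bound at $\nu=1$ for $\nu\geq1$, and extrapolation by convexity through $\{ \nu,1\}$ past $0$ for $\nu\leq0$; all of these steps check out, including the reduction of $g'(1)\geq0$ to $L(x,y)\leq x+y$ for $x=\sqrt{a}$, $y=\sqrt{b}$. Each approach has its merits: the paper's is entirely self-contained and makes transparent that the lemma is the reversed Young inequality applied twice in disguise, whereas yours isolates the one sharp ingredient (nonnegativity of $g'(1)$, which holds with a factor of two to spare by the logarithmic--arithmetic mean inequality) and would adapt easily to other linear-in-$\nu$ perturbations of the left-hand side. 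A small simplification of your argument: you do not actually need the classical bound $L(x,y)\leq(x+y)/2$, since $L(x,y)\leq\max(x,y)\leq x+y$ already suffices.
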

 \begin{proof}
Let $\nu\not\in[0,1]$.
Assume that $f(t)=t^{1-\nu}-\nu+(\nu-1)t\,\,(t\in(0,\infty))$. It is easy to see that
$f(t)$ has a minimum at $t=1$ in the interval $(0,\infty)$. Hence
$f(t)\geq f(1)=0$ for all $t>0$. Assume that $a,b>0$. Letting $t={b\over a}$, we get
\begin{align}\label{1b10}
 \nu a+(1-\nu)b\leq a^{\nu}b^{1-\nu}.
 \end{align}
 Now by inequality \eqref{1b10} we have
\begin{align}\label{mos-b}
\nu a+(1-\nu)b+(\nu-1)(\sqrt{a}-\sqrt{b})^2&=(2-2\nu)\sqrt{ab}+(2\nu-1)a\nonumber\\&\leq(\sqrt{ab})^{2-2\nu}a^{2\nu-1}= a^{\nu}b^{1-\nu}.
\end{align}
Similarly
\begin{align}\label{mos-b1}
\nu b+(1-\nu)a+(\nu-1)(\sqrt{b}-\sqrt{a})^2\leq b^{\nu}a^{1-\nu}.
\end{align}
 Adding inequalities \eqref{mos-b} and \eqref{mos-b1} we get the desired inequality.
\end{proof}
\begin{lemma}
Let $A, B\in{\mathbb B}({\mathscr H})_+$ and either $1\geq t\geq s>{\frac{1}{2}}$ or $0\leq t\leq s<\frac{1}{2}$. Then
\begin{align}\label{tool}
A^{s}\otimes B^{1-s}+A^{1-s}\otimes B^{s} \nonumber&+\left(\frac{t-s}{s-1/2}\right)\left(A^{s}\otimes B^{1-s}+A^{1-s}\otimes B^{s}-2(A^{\frac{1}{2}}\otimes B^{\frac{1}{2}})\right) \nonumber\\&\leq A^{t}\otimes B^{1-t}+A^{1-t}\otimes B^{t}\,.
 \end{align}
\end{lemma}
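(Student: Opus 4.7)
The plan is to reduce the operator inequality to a scalar inequality via the joint functional calculus of the commuting factors $A\otimes I$ and $I\otimes B$, and then to derive that scalar inequality from Lemma~\ref{lemma14} by a judicious substitution.

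Since $A\otimes I$ and $I\otimes B$ are commuting positive operators on $\mathscr{H}\otimes\mathscr{H}$, their joint continuous functional calculus endows each expression $A^{p}\otimes B^{q}=(A\otimes I)^{p}(I\otimes B)^{q}$ ($p,q\in\mathbb{R}$) with an unambiguous meaning, and the displayed operator inequality is equivalent to the family of scalar inequalities
\begin{equation*}
a^{s}b^{1-s}+a^{1-s}b^{s}+\tfrac{t-s}{s-1/2}\bigl(a^{s}b^{1-s}+a^{1-s}b^{s}-2\sqrt{ab}\bigr)\leq a^{t}b^{1-t}+a^{1-t}b^{t}
\end{equation*}
indexed by $(a,b)\in\sigma(A)\times\sigma(B)\subset(0,\infty)^{2}$.

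To produce this scalar inequality, my plan is to apply Lemma~\ref{lemma14} after substituting the positive numbers $\alpha:=a^{s}b^{1-s}$ and $\beta:=a^{1-s}b^{s}$ for its $a$ and $b$, and taking the parameter
\begin{equation*}
\nu=\frac{t+s-1}{2s-1}.
\end{equation*}
A short computation then yields $\sqrt{\alpha\beta}=\sqrt{ab}$, $\alpha^{\nu}\beta^{1-\nu}=a^{t}b^{1-t}$, $\alpha^{1-\nu}\beta^{\nu}=a^{1-t}b^{t}$, and $2(\nu-1)=\frac{t-s}{s-1/2}$; with these identities in hand, Lemma~\ref{lemma14} returns precisely the scalar inequality displayed above, provided only that $\nu\notin[0,1]$.

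The only remaining bookkeeping is to verify the constraint $\nu\notin[0,1]$ in both regimes of the hypothesis. In Case~1 ($1\geq t\geq s>1/2$) we have $2s-1>0$ and $t+s-1\geq 2s-1>0$, so $\nu\geq 1$. In Case~2 ($0\leq t\leq s<1/2$) both numerator and denominator of $\nu$ are negative, and the inequality $|t+s-1|=1-t-s\geq 1-2s=|2s-1|$ is equivalent to $t\leq s$, so again $\nu\geq 1$. In either case $\nu\notin[0,1]$ and Lemma~\ref{lemma14} applies, completing the proof. The main obstacle I anticipate is recognizing the right substitution $(\alpha,\beta,\nu)$; once it is identified, everything else reduces to routine algebra and a sign check.
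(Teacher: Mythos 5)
Your proof is correct and follows essentially the same route as the paper: both arguments reduce the tensor inequality to a scalar one via the functional calculus of the commuting factors $A\otimes I$ and $I\otimes B$ and then invoke Lemma \ref{lemma14}, and your substitution $\nu=\frac{t+s-1}{2s-1}$ applied to the pair $\bigl(a^{s}b^{1-s},\,a^{1-s}b^{s}\bigr)$ is exactly the paper's reparametrization $\beta/\alpha=\frac{2t-1}{2s-1}=2\nu-1$ in disguise. The only loose end, which the paper shares, is the boundary case $t=s$, where $\nu=1\in[0,1]$ so Lemma \ref{lemma14} does not literally apply; there the asserted inequality degenerates to a trivial equality, so nothing is lost.
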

\begin{proof}
If we put $a^{-1}$ instead of $b$ and $s$ instead of $2\nu-1$, respectively, in Lemma \ref{lemma14} we get
 \begin{align*}
a+a^{-1}+(s-1)(a+a^{-1}-2)\leq a^{s}+a^{-s}\qquad(a>0, s\geq1).
 \end{align*}
Let us fix positive real numbers $\alpha, \beta$ such that $\beta\geq{\alpha}$. Using the functional calculus, if we replace $a$ by $A^{\alpha}\otimes B^{-\alpha}$ and $s$ by $\frac{\beta}{\alpha}$, then we get
\begin{align}\label{apq}
A^{\alpha}\otimes B^{-\alpha}\nonumber&+ A^{-\alpha}\otimes B^{\alpha}\nonumber\\&\qquad+\left(\frac{\beta-\alpha}{\alpha}\right) \left(A^{\alpha}\otimes B^{-\alpha}+A^{-\alpha}\otimes B^{\alpha}-2I\right)\nonumber\\&\leq A^{\beta}\otimes B^{-\beta}+A^{-\beta}\otimes B^{\beta}.
 \end{align}
Multiplying both sides of \eqref{apq} by $A^{\frac{1}{2}}\otimes B^{\frac{1}{2}}$ we reach
\begin{align}\label{frac23}
A^{1+\alpha}\otimes B^{1-\alpha}\nonumber&+A^{1-\alpha}\otimes B^{1+\alpha}\nonumber \qquad\\&+\left(\frac{\beta-\alpha}{\alpha}\right)\left(A^{1+\alpha}\otimes B^{1-\alpha}+A^{1-\alpha}\otimes B^{1+\alpha}-2(A\otimes B)\right)\nonumber \\&\leq A^{1+\beta}\otimes B^{1-\beta}+A^{1-\beta}\otimes B^{1+\beta}\,.
 \end{align}
 Now, if we replace $\alpha,\beta$, $A, B$ by $2s-1, 2t-1$, $A^{\frac{1}{2}}, B^{\frac{1}{2}}$, respectively, in \eqref{frac23}, we obtain
 \begin{align*}
A^{s}\otimes B^{1-s}+A^{1-s}\otimes B^{s} \nonumber&+\left(\frac{t-s}{s-1/2}\right)\left(A^{s}\otimes B^{1-s}+A^{1-s}\otimes B^{s}-2(A^{\frac{1}{2}}\otimes B^{\frac{1}{2}})\right) \nonumber\\&\leq A^{t}\otimes B^{1-t}+A^{1-t}\otimes B^{t}\,.
 \end{align*}
 \end{proof}
We are ready to establish the main result of this section.
\begin{theorem}\label{okmn}
Let $A_j, B_j\in{\mathbb B}({\mathscr H})_+\,\,(1\leq j\leq n)$. Then
\begin{align*}
&\sum_{j=1}^n(A_j\sharp_{s}B_j)\circ \sum_{j=1}^n(A_j\sharp_{1-s}B_j)
\\&\leq\sum_{j=1}^n(A_j\sharp_{s}B_j)\circ \sum_{j=1}^n(A_j\sharp_{1-s}B_j)
\\&\quad+\left(\frac{t-s}{s-1/2}\right)\left(\sum_{j=1}^n(A_j\sharp_{s}B_j)\circ \sum_{j=1}^n(A_j\sharp_{1-s}B_j)
-\sum_{j=1}^n(A_j\sharp B_j)\circ \sum_{j=1}^n(A_j\sharp B_j)\right)
\\&\leq \sum_{j=1}^n(A_j\sharp_{t}B_j)\circ \sum_{j=1}^n(A_j\sharp_{1-t} B_j)\,,
 \end{align*}
 for $1\geq t\geq s>{\frac{1}{2}}$ or $0\leq t\leq s<\frac{1}{2}$.
\end{theorem}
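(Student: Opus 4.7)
I would prove the two inequalities separately, and the first is essentially trivial under the hypotheses: the added term equals $\frac{t-s}{s-1/2}$ times the difference $\sum_{j}(A_j\sharp_{s}B_j)\circ\sum_{j}(A_j\sharp_{1-s}B_j)-\sum_{j}(A_j\sharp B_j)\circ\sum_{j}(A_j\sharp B_j)$. The coefficient is nonnegative in both sub-cases because numerator and denominator share their sign, and the parenthesized difference is nonnegative by the first inequality in \eqref{34rf}. Thus the first inequality amounts to adding a nonnegative quantity.

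The substantive part is the second inequality. The plan is to apply Lemma \eqref{tool} pointwise to each pair $(j,k)$ and then transfer from tensor products to Hadamard products by summation. After the standard reduction to invertible $A_j$ (replace $A_j$ by $A_j+\varepsilon I$ and use continuity of the weighted geometric mean), I set $X_j:=A_j^{-1/2}B_jA_j^{-1/2}$ and apply Lemma \eqref{tool} with $A:=X_j$ and $B:=X_k$. This produces a tensor-product inequality whose terms are $X_j^{\alpha}\otimes X_k^{\beta}$ for $\alpha,\beta\in\{s,1-s,t,1-t,1/2\}$. Conjugating both sides by the positive operator $A_j^{1/2}\otimes A_k^{1/2}$ preserves the inequality, and the Kubo--Ando formula combined with the multiplicativity of tensor products gives
$$\bigl(A_j^{1/2}\otimes A_k^{1/2}\bigr)\bigl(X_j^{\alpha}\otimes X_k^{\beta}\bigr)\bigl(A_j^{1/2}\otimes A_k^{1/2}\bigr) = (A_j\sharp_{\alpha}B_j)\otimes(A_k\sharp_{\beta}B_k),$$
so every term becomes a tensor product of weighted geometric means of the $A_j, B_j$.

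Next, I would sum over $1\leq j,k\leq n$ and use bilinearity $\sum_{j,k}U_j\otimes V_k=\bigl(\sum_j U_j\bigr)\otimes\bigl(\sum_k V_k\bigr)$ to collect the terms into factored sums, then apply the positive linear map $T\mapsto U^{*}TU$ (which sends $X\otimes Y$ to $X\circ Y$ via the isometry $U$ from the introduction) to convert everything to Hadamard form. Finally, commutativity of $\circ$ together with a relabeling $j\leftrightarrow k$ identifies each pair of mirror-image terms, for instance $\bigl(\sum_j A_j\sharp_{s}B_j\bigr)\circ\bigl(\sum_k A_k\sharp_{1-s}B_k\bigr)=\bigl(\sum_j A_j\sharp_{1-s}B_j\bigr)\circ\bigl(\sum_k A_k\sharp_{s}B_k\bigr)$; the mirror pairs combine with a common factor of $2$ that cancels throughout, yielding exactly the desired second inequality. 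No new conceptual ingredient beyond Lemma \eqref{tool} is needed---the argument is essentially bookkeeping---and the only point requiring care is the four-step sequence (conjugation, summation, conversion $\otimes\to\circ$, symmetrization) together with the routine $\varepsilon$-perturbation reduction to the invertible case.
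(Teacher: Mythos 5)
Your proposal is correct and follows essentially the same route as the paper: apply the tensor-product Lemma (inequality \eqref{tool}) to $C_j=A_j^{-1/2}B_jA_j^{-1/2}$ and $C_i$, conjugate by $A_j^{1/2}\otimes A_i^{1/2}$, sum over all pairs, convert $\otimes$ to $\circ$ via the isometry $U$, and symmetrize (the paper writes this as a $\tfrac{1}{2}\sum_{i,j}$ of mirror pairs). Your explicit justification of the first inequality via the sign of $\tfrac{t-s}{s-1/2}$ and the first inequality of \eqref{34rf} is exactly what the paper leaves as ``clear,'' and the $\varepsilon$-perturbation is unnecessary since ${\mathbb B}({\mathscr H})_+$ denotes positive \emph{invertible} operators here.
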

\begin{proof} The first inequality is clear. We prove the second one. Put $C_j=A_j^{-{1\over2}}B_jA_j^{-{1\over2}}\,\,(1\leq j\leq n)$. By inequality \eqref{tool} we get
\begin{align}\label{evs}
C_j^{s}\otimes C_i^{1-s}+C_j^{1-s}\otimes C_i^{s}&+\left(\frac{t-s}{s-1/2}\right)
\left(C_j^{s}\otimes C_i^{1-s}+C_j^{1-s}\otimes C_i^{s}-2\left(C_j^{\frac{1}{2}}\otimes C_i^{\frac{1}{2}}\right)\right)\nonumber\\&\leq C_j^{t}\otimes C_i^{1-t}+C_j^{1-t}\otimes C_i^{t}\qquad(1\leq i,j\leq n).
 \end{align}
Multiplying both sides of \eqref{evs} by $A_j^{\frac{1}{2}}\otimes A_i^{\frac{1}{2}}$ we get

 \begin{align}\label{9090}
(A_j&\sharp_{s} B_j)\otimes (A_i\sharp_{1-s} B_i)+(A_j\sharp_{1-s} B_j)\otimes (A_i\sharp_{s}B_i)+\left(\frac{t-s}{s-1/2}\right)
\Big((A_j\sharp_{s} B_j)\otimes (A_i\sharp_{1-s} B_i)\nonumber\\&\quad+(A_j\sharp_{1-s} B_j)\otimes (A_i\sharp_{s} B_i)-2(A_j\sharp B_j)\otimes (A_i\sharp B_i)\Big)\nonumber\\&\leq
(A_j\sharp_{t}B_j)\otimes (A_i\sharp_{1-t} B_i)+(A_j\sharp_{1-t}B_j)\otimes (A_i\sharp_{t}B_i)\,.
 \end{align}
for all $1\leq i,j\leq n$. Therefore
 \begin{align*}
&\sum_{j=1}^n(A_j\sharp_{s}B_j)\circ \sum_{j=1}^n(A_j\sharp_{1-s}B_j)
\\ & \,+\left(\frac{t-s}{s-1/2}\right)\left(\sum_{j=1}^n(A_j\sharp_{s}B_j)\circ \sum_{j=1}^n(A_j\sharp_{1-s}B_j)
-\left(\sum_{j=1}^nA_j\sharp B_j\right)\circ\left(\sum_{j=1}^n A_j\sharp B_j\right)\right)\\&=
\frac{1}{2}\sum_{i,j=1}^n\Big((A_j\sharp_{s} B_j)\circ (A_i\sharp_{1-s} B_i)+(A_j\sharp_{1-s} B_j)\circ (A_i\sharp_{s}B_i)\nonumber\\&
\,+\left(\frac{t-s}{s-1/2}\right)
\left((A_j\sharp_{s} B_j)\circ (A_i\sharp_{1-s} B_i)+(A_j\sharp_{1-s} B_j)\circ (A_i\sharp_{s} B_i)-2(A_j\sharp B_j)\circ (A_i\sharp B_i)\right)\Big)\\&\leq \frac{1}{2}\sum_{i,j=1}^n\left((A_j\sharp_{t}B_j)\circ (A_i\sharp_{1-t} B_i)+(A_j\sharp_{1-t}B_j)\circ (A_i\sharp_{t}B_i)\right)\qquad(\textrm{by inequality\,\eqref{9090}})\\&
=\sum_{j=1}^n(A_j\sharp_{t}B_j)\circ \sum_{j=1}^n(A_j\sharp_{1-t} B_j)\,.
\end{align*}
\end{proof}
If we put $B_j=I\,\,(1\leq j\leq n)$ in Theorem \ref{okmn}, then we get the next result.

\begin{corollary}
Let $A_j\in{\mathbb B}({\mathscr H})_+\,\,(1\leq j\leq n)$. Then
\begin{align*}
\left(\sum_{j=1}^nA_j^{s}\right)&\circ \left(\sum_{j=1}^nA_j^{1-s}\right)
\\&+\left(\frac{t-s}{s-1/2}\right)\left(\left(\sum_{j=1}^nA_j^{s}\right)\circ \left(\sum_{j=1}^nA_j^{1-s}\right)
-\left(\sum_{j=1}^nA_j^{\frac{1}{2}}\right)\circ \left(\sum_{j=1}^nA_j^{\frac{1}{2}}\right)\right)
\\&\leq \left(\sum_{j=1}^nA_j^{t}\right)\circ \left(\sum_{j=1}^nA_j^{1-t}\right)\,,
 \end{align*}
 where $1\geq t\geq s>{\frac{1}{2}}$ or $0\leq t\leq s<\frac{1}{2}$.
\end{corollary}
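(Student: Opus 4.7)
The plan is to obtain the corollary as a direct specialization of Theorem~\ref{okmn} to the case $B_j = I$ for all $j$. The key computational step is identifying what the weighted geometric mean $A_j \sharp_\mu B_j$ becomes in this case.

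First I would record the formula
\[
A \sharp_\mu I \;=\; A^{\frac{1}{2}}\bigl(A^{-\frac{1}{2}}\, I\, A^{-\frac{1}{2}}\bigr)^{\mu} A^{\frac{1}{2}} \;=\; A^{\frac{1}{2}}\, A^{-\mu}\, A^{\frac{1}{2}} \;=\; A^{1-\mu},
\]
which holds for every $A \in \mathbb{B}(\mathscr{H})_+$ and $\mu \in [0,1]$ by the functional calculus. In particular $A_j \sharp_s I = A_j^{1-s}$, $A_j \sharp_{1-s} I = A_j^{s}$, and $A_j \sharp I = A_j \sharp_{1/2} I = A_j^{1/2}$, and likewise with $s$ replaced by $t$.

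Next, I would set $B_j = I$ for every $j$ in the inequality of Theorem~\ref{okmn}. Each occurrence of $A_j \sharp_s B_j$, $A_j \sharp_{1-s} B_j$, $A_j \sharp_t B_j$, $A_j \sharp_{1-t} B_j$, and $A_j \sharp B_j$ is then replaced by the corresponding power of $A_j$ via the identity above. Because the Hadamard product is commutative, $\sum_j A_j^{s}\circ\sum_j A_j^{1-s}$ equals $\sum_j A_j^{1-s}\circ\sum_j A_j^{s}$, so the sides of the resulting inequality take exactly the form displayed in the corollary, valid under the same hypothesis $1\geq t\geq s > \tfrac{1}{2}$ or $0 \leq t \leq s < \tfrac{1}{2}$.

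There is no real obstacle here: the entire content is the observation $A\sharp_\mu I = A^{1-\mu}$ together with commutativity of $\circ$. Once Theorem~\ref{okmn} is in hand, the corollary follows by a one-line substitution, so the proof will essentially consist of stating the specialization and pointing to Theorem~\ref{okmn}.
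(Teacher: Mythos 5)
Your proposal is correct and matches the paper's own (one-line) derivation: the paper simply sets $B_j=I$ in Theorem~\ref{okmn}, exactly as you do, and your added details ($A\sharp_\mu I=A^{1-\mu}$ and the commutativity of $\circ$ to match the displayed form) are the right justifications for that substitution.
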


\bigskip

\bibliographystyle{amsplain}

\end{document}